\newtheorem{theorem}{Theorem}
\newtheorem{Proposition}{Proposition}
\numberwithin{equation}{section}
\title{Small Pythagorean triples modulo prime powers}
\author{Stephan Baier \and Anup Haldar} 
\address{Stephan Baier,
Ramakrishna Mission Vivekananda Educational and Research Institute, Department of Mathematics, G. T. Road, PO Belur Math, Howrah, West Bengal 711202, India}
\email{stephanbaier2017@gmail.com}
\address{Anup Haldar,
Ramakrishna Mission Vivekananda Educational and Research Institute, Department of Mathematics, G. T. Road, PO Belur Math, Howrah, West Bengal 711202, India}
\email{anuphaldar1996@gmail.com}
\subjclass[2020]{11L40,11T23,11K36} 
\keywords{quadratic congruences, Poisson summation, evaluation of complete exponential sums}
\begin{document}
\begin{abstract}
Let $p>5$ be a fixed prime. We obtain an asymptotic formula related to small solutions of quadratic congruences of the form $x_1^2+x_2^2\equiv x_3^2\bmod{p^n}$ where $\max\{|x_1|,|x_2|,|x_3|\}\le p^{\nu n}$ with $\nu>1/2$. 
\end{abstract}
\maketitle
\tableofcontents

\section{Introduction and main results} 
Let $Q(x_1,...,x_n)$ be a quadratic form with integer coefficients. The question of detecting small solutions of congruences of the form
$$
Q(x_1,...,x_n)\equiv 0 \bmod{q}
$$
has received a lot of attention (see, in particular, \cite{Hea1}, \cite{Hea2} and \cite{Hea3}). Of particular interest is the case when 
$$
Q(x_1,...,x_n)=\alpha_1x_1^2+\cdots +\alpha_kx_k^2
$$ 
is a diagonal form and $q=p^n$ is a prime power. This was considered by Hakimi in \cite{Hak}, with emphasis on quadratic forms with a large number $k$ of variables. Here we want to focus on the case $k=3$. In this case, a result by Schinzel, Schlickewei and Schmidt \cite{SSS} for general moduli $q$ implies that there is a non-zero solution $(x_1,x_2,x_3)\in \mathbb{Z}^3$ such that $\max\{|x_1|,|x_2|,|x_3|\}=O(q^{2/3})$, where the $O$-constant is absolute.  For $q$ square-free, the exponent $2/3$ was improved to $3/5$ by Heath-Brown \cite{Hea3}. 
A result by Cochrane \cite{Coc} for general moduli $q$ implies that for any {\it fixed} form $Q(x)$, there is a non-zero solution with $\max\{|x_1|,|x_2|,|x_3|\}=O(q^{1/2})$, where the $O$-constant may depend on the form. In the present paper, we are interested in {\it asymptotic formulas} for the number of non-zero solutions in boxes $\max\{|x_1|,|x_2|,|x_3|\}\le N$ with as small as possible $N$ if $q$ is a prime power. In contrast, the above-mentioned works deal with the {\it existence} of solutions.  We shall consider a smoothed version of this problem with the special choice $\alpha_1=1$, $\alpha_2=1$, $\alpha_3=-1$, i.e., the congruences in question are of the form
\begin{equation} \label{pythpn}
x_1^2+x_2^2\equiv x_3^2\bmod{p^n}.
\end{equation}
We further restrict ourselves to $x_i$'s which are coprime to $p$, which automatically excludes the trivial solution $(0,0,0)$. Thus, we investigate the distribution of Pythagorean triples in $\left((\mathbb{Z}/p^n\mathbb{Z})^{\ast}\right)^3$.  

Small solutions of \eqref{pythpn} arise immediately from Pythagorean triples in $\mathbb{Z}^3$, provided that $p>5$. (If $p=2,3,5$, we have $x_1^2+x_2^2-x_3^2\not\equiv 0\bmod{p}$ if $(x_1x_2x_3,p)=1$.) It is known that the number of Pythagorean triples $(x_1,x_2,x_3)\in \mathbb{Z}^3$ satisfying $x_1^2+x_2^2=x_3^2$ such that $|x_3|\le N$ is $\sim cN\log N$ with $c=8/\pi$ (see Proposition \ref{Pytha} below). It should not be difficult to modify this into an asymptotic of the form $\sim c_pN\log N$ with $c_p$ depending on $p$ if one includes the restriction $(x_1x_2x_3,p)=1$. If $N< \sqrt{q/2}$ with $q=p^n$, then any solution $(x_1,x_2,x_3)$ of the congruence \eqref{pythpn} is in fact a Pythagorean triple. Hence, in this case, one expects  an asymptotic of the form $\sim c_pN\log N$ for the number of solutions satisfying $(x_1x_2x_3,p)=1$ and $\max\{|x_1|,|x_2|,|x_3|\}\le N$ of the said congruence. In contrast, for much larger $N$, the expected number of solutions should be $\sim d_pN^3/q$ for a suitable constant $d_p>0$. In particular, one may expect this to hold for $N\ge q^{1/2+\varepsilon}$. Hence, there should be a transition between two different asymptotic formulas near the point $N=q^{1/2}$. Indeed, we will work out an asymptotic of the said form $\sim d_pN^3/q$ for $N\ge q^{\nu}$ with $\nu>1/2$. Our precise result is as follows. 
 
\begin{theorem} \label{mainresult}
Let $\varepsilon>0$ and $\nu>1/2$ be fixed, $\Phi:\mathbb{R}\rightarrow \mathbb{R}_{\ge 0}$ be a Schwartz class function and $p>5$ be a prime. Then as $n\rightarrow \infty$, we have the asymptotic formula
\begin{equation} \label{main}
\sum\limits_{\substack{(x_1,x_2,x_3)\in \mathbb{Z}^3\\ (x_1x_2x_3,p)=1\\ x_1^2+x_2^2-x_3^2 \equiv 0 \bmod{p^n}}} \Phi\left(\frac{x_1}{N}\right)
\Phi\left(\frac{x_2}{N}\right)\Phi\left(\frac{x_3}{N}\right)\sim
\hat{\Phi}(0)^3\cdot \frac{(p-s(p))(p-1)}{p^2}\cdot \frac{N^3}{p^{n}}
\end{equation}
if $N\ge  p^{n\nu}$, where 
\begin{equation} \label{sdef}
s(p):=\begin{cases}
3 & \mbox{ if } p\equiv 3\bmod{4},\\
5 & \mbox{ if } p\equiv 1\bmod{4}.
\end{cases}
\end{equation}
\end{theorem}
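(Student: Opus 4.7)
The natural strategy is to apply Poisson summation on each of the three variables. Detecting the congruence via $\mathbf{1}_{p^n\mid m}=p^{-n}\sum_{a\bmod p^n}e(am/p^n)$ converts the left-hand side $S$ of \eqref{main} into
\[
S=\frac{1}{p^n}\sum_{a\bmod p^n}F(a)^2\,F(-a),\qquad F(a):=\sum_{\substack{x\in\mathbb{Z}\\(x,p)=1}}\Phi(x/N)\,e\!\left(\tfrac{ax^2}{p^n}\right).
\]
Splitting each $F(a)$ by residue $b\bmod p^n$ with $(b,p)=1$ and invoking Poisson in the form $\sum_{x\equiv b(p^n)}\Phi(x/N)=(N/p^n)\sum_k\hat{\Phi}(kN/p^n)e(kb/p^n)$ yields
\[
S=\frac{N^3}{p^{4n}}\sum_{\mathbf{k}\in\mathbb{Z}^3}\hat{\Phi}\!\Bigl(\tfrac{k_1N}{p^n}\Bigr)\hat{\Phi}\!\Bigl(\tfrac{k_2N}{p^n}\Bigr)\hat{\Phi}\!\Bigl(\tfrac{k_3N}{p^n}\Bigr)\,\Sigma(\mathbf{k}),
\]
where $\Sigma(\mathbf{k}):=\sum_{a\bmod p^n}G(a,k_1)G(a,k_2)G(-a,k_3)$ and $G(a,k):=\sum_{\substack{b\bmod p^n\\(b,p)=1}}e\bigl((ab^2+kb)/p^n\bigr)$.

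The $\mathbf{k}=\mathbf{0}$ contribution supplies the predicted main term. Exchanging the $a$- and $b_i$-summations gives $\Sigma(\mathbf{0})=p^n M_n$, where $M_n$ counts the solutions of \eqref{pythpn} with $(b_i,p)=1$ in $(\mathbb{Z}/p^n\mathbb{Z})^3$. A direct count on $\mathbb{F}_p$ (distinguishing whether $-1$ is a square) yields $M_1=(p-1)(p-s(p))$; since the form $x_1^2+x_2^2-x_3^2$ is non-singular on $(\mathbb{F}_p^\ast)^3$ for $p>5$, Hensel's lemma gives $M_n=M_1\,p^{2(n-1)}$. Substituting reproduces exactly $\hat{\Phi}(0)^3(p-s(p))(p-1)p^{-2}\cdot N^3 p^{-n}$, which matches the claimed main term.

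For the $\mathbf{k}\neq\mathbf{0}$ contributions, the Schwartz decay of $\hat{\Phi}$ effectively truncates the dual sum to $|k_i|\le K:=p^{n(1-\nu)+n\varepsilon}$, a range strictly smaller than $p^{n/2}$ precisely because $\nu>\tfrac12$. I would evaluate $G(a,k)$ by writing $a=p^ja'$ with $(a',p)=1$ and $0\le j\le n$; since $e((ab^2+kb)/p^n)$ has period $p^{n-j}$ in $b$, one obtains $G(a,k)=p^j\sum_{b\bmod p^{n-j},(b,p)=1}e((a'b^2+(k/p^j)b)/p^{n-j})$, which (after removing the coprimality by $\mathbf{1}_{(b,p)=1}=1-\mathbf{1}_{p\mid b}$ and completing the square, using $p>2$) reduces to a quadratic Gauss sum of magnitude $\ll p^{(n+j)/2}$ together with a phase $e(-\overline{4a'}(k/p^j)^2/p^{n-j})$ and the divisibility constraint $p^j\mid k$. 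The phases in $G(a,k_1)G(a,k_2)G(-a,k_3)$ then combine into $e(\overline{4a'}(k_3^2-k_1^2-k_2^2)p^{-2j}/p^{n-j})$, and summation over the units $a'\bmod p^{n-j}$ produces a Ramanujan sum in $k_3^2-k_1^2-k_2^2$ that vanishes unless this quantity is highly $p$-divisible. Collecting contributions across all $j$ and the truncated $\mathbf{k}$-range should yield an error of size $N^3 p^{-n-\delta}$ for some $\delta=\delta(\nu-\tfrac12)>0$.

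The main obstacle is the uniform and sharp evaluation of $G(a,k)$ across every valuation $j=v_p(a)\in[0,n]$, and the subsequent verification that the cubic product $G(a,k_1)G(a,k_2)G(-a,k_3)$, summed over $a$, generates enough cancellation (via the Ramanujan-sum phenomenon above) to absorb the volume factor $K^3$ from the three dual variables. Intermediate $j$-ranges are especially delicate: the coprimality correction $\mathbf{1}_{p\mid b}$ effectively lowers the modulus and couples non-trivially to the Legendre-symbol sign of the emerging Gauss sum, and one must show that neither an intermediate $j$ nor the boundary cases $j=0$ and $j=n-1$ produce a term exceeding the allowed error. This local Gauss-sum bookkeeping at a prime-power modulus, whose quantitative cleanness relies on $\nu>\tfrac12$, is the arithmetic heart of the argument.
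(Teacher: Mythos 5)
Your route is genuinely different from the paper's and, in outline, it works. The paper does not open the congruence with additive characters in all three variables; instead it parametrizes the solutions of $y_1^2+y_2^2\equiv 1\bmod{p^n}$ with $(y_1y_2,p)=1$ by the rational map $t\mapsto\bigl(\tfrac{1-t^2}{1+t^2},\tfrac{2t}{1+t^2}\bigr)$ (justified $p$-adically via Hensel), applies Poisson only in $x_1,x_2$, evaluates the resulting one-variable complete sum $\sum_t e_{p^n}\bigl(x_3\cdot\tfrac{k_1(1-t^2)+2k_2t}{1+t^2}\bigr)$ by quoting Cochrane--Zheng's stationary-phase theorem (Proposition \ref{Expsums}), and then performs one more Poisson summation in $x_3$. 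Your main-term computation is correct and matches the paper's constant: $M_1=(p-1)(p-s(p))$ and $M_n=M_1p^{2(n-1)}$ by Hensel give exactly $\hat{\Phi}(0)^3(p-s(p))(p-1)p^{-2}N^3p^{-n}$. More strikingly, the two error analyses converge to the same endpoint: your Ramanujan-sum condition forces $p^{n+j-1}\mid k_3^2-k_1^2-k_2^2$, and since the truncation gives $|k_3^2-k_1^2-k_2^2|\le 2K^2<p^{n-1}$ precisely when $\nu>1/2$, the surviving frequencies are exactly the Pythagorean triples $k_1^2+k_2^2=k_3^2$ with $|k_i|\le K$ --- the same dual count the paper reaches (its Subsection 3.4), and which it bounds by $O(K^{1+\varepsilon})$ using Proposition \ref{Pytha}. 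You should make this last step explicit: without invoking the $O(K^{1+\varepsilon})$ count of Pythagorean triples in a box, the volume factor $K^3$ is not beaten, and a quick bookkeeping check (contribution $\ll N^3p^{-4n}\cdot p^{5n/2+j/2}\cdot(K/p^j)^{1+\varepsilon}$ per valuation $j$) confirms the total error is $o(N^3p^{-n})$. What your approach buys is self-containedness and symmetry --- no $p$-adic parametrization, no appeal to the Cochrane--Zheng machinery --- at the cost of the Gauss-sum bookkeeping over all valuations $j=v_p(a)$ that you rightly identify as the technical heart (the coprimality correction $\mathbf{1}_{(b,p)=1}=1-\mathbf{1}_{p\mid b}$ and the Jacobi-symbol twist of the $a'$-sum for $n-j$ odd both need to be carried through, though neither changes the order of magnitude); what the paper's route buys is that this entire local analysis is outsourced to a ready-made evaluation of exponential sums with rational function entries.
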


Our original motivation was to derive a quantitative $p$-adic version of the Oppenheim conjecture for particular ternary quadratic forms. Indeed, our problem can be re-interpreted as detecting small integers $x_1,x_2,x_3$ of $p$-adic norm 1 such that the $p$-adic norm of $Q(x_1,x_2,x_3)=x_1^2+x_2^2-x_3^2$ is also small.

Key ingredients in the method are a parametrization of $\mathbb{Q}_p$-rational points $(z_1,z_2)$ on the circle
$$
z_1^2+z_2^2=1,
$$
repeated use of Poisson summation and an explicit evaluation of complete exponential sums with rational functions to prime power moduli due to Chochrane \cite{CoZ}. This transforms the problem into a dual problem which amounts to counting ordinary Pythagorean triples in $\mathbb{Z}^3$.  

It should be possible to obtain a non-smoothed version of Theorem \ref{mainresult} along similar lines. However, the technical details become then more complicated.  Moreover, the authors believe that a generalization of the method to arbitrary fixed diagonal forms $Q(x_1,x_2,x_3)$ and general moduli $q$ is possible and that a similar result as in Theorem \ref{mainresult} holds for boxes that are not centered at 0, leading to equidistribution of triples. This may be subject to future research. 

Another interesting question is how the number of solutions behaves in the transition range near the point $N=q^{1/2}$. Here a more natural approach is to count representations of $f_m(x_3)=mq+x_3^2$ as sums of two squares, where $|m|$ is small. \\ \\
{\bf Acknowledgements.} The authors would like to thank the Ramakrishna Mission Vivekananda Educational and Research Institute for providing excellent working conditions. The second-named author would like to thank CSIR, Govt. of India for financial support in the form of a Junior Research Fellowship. 

\section{Preliminaries}
The following preliminaries will be needed in the course of this paper. We will  use the notation 
$$
e_q(z):=e\left(\frac{z}{q}\right)=e^{2\pi i z/q}
$$
for $q\in \mathbb{N}$ and denote by $G_q$ the quadratic Gauss sum
$$
G_q=\sum\limits_{x=1}^q e_q(x^2).
$$
We recall that if $q$ is odd, then 
$$
G_q=\sum\limits_{y=1}^q \left(\frac{y}{q}\right),
$$
where $\left(\frac{y}{q}\right)$ is the Jacobi symbol. We further recall that in this case, $|G_q|=\sqrt{q}$.

\begin{Proposition}[Parametrization of points on a circle] \label{para}
Let $K$ be a field. Then all $K$-rational points on the circle
\begin{equation*} 
z_1^2+z_2^2=1
\end{equation*}
are parametrized in the form
$$
(z_1,z_2)=\left(\frac{1-t^2}{1+t^2},\frac{2t}{1+t^2}\right),
$$
where $t\in K$ with $t^2\not=-1$. The map $m : \{t\in K : t^2\not=-1\} \longrightarrow \{(z_1,z_2)\in K^2 : z_1^2+z_2^2=1\}$, defined by 
$$
m(t)=\left(\frac{1-t^2}{1+t^2},\frac{2t}{1+t^2}\right),
$$
is bijective. 
\end{Proposition}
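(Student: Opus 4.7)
The strategy is the classical stereographic projection from the point $(-1,0)$. A line through $(-1,0)$ of slope $t$ meets the circle $z_1^2+z_2^2=1$ in exactly one further point, and the formula $m(t)$ describes that second intersection. The natural inverse is then $\varphi(z_1,z_2):=z_2/(1+z_1)$, which reads off the slope of the line joining $(-1,0)$ to $(z_1,z_2)$. Introducing $\varphi$ explicitly turns bijectivity into two one-sided identities to verify.

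The verification splits into three short algebraic checks, none of which is more than a direct substitution. First, $m(t)$ lies on the circle: the identity $(1-t^2)^2+(2t)^2=(1+t^2)^2$ divided by $(1+t^2)^2\neq 0$ gives $z_1^2+z_2^2=1$. Second, $\varphi\circ m=\mathrm{id}$: from $z_1=(1-t^2)/(1+t^2)$ one obtains $1+z_1=2/(1+t^2)$, and combining with $z_2=2t/(1+t^2)$ yields $\varphi(z_1,z_2)=t$. Third, $m\circ\varphi=\mathrm{id}$ on the circle minus $(-1,0)$: using $z_1^2+z_2^2=1$ one computes $1+\varphi^2=((1+z_1)^2+z_2^2)/(1+z_1)^2=2/(1+z_1)$, which is nonzero and from which the identities $(1-\varphi^2)/(1+\varphi^2)=z_1$ and $2\varphi/(1+\varphi^2)=z_2$ follow immediately.

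The one delicate point is the exceptional value $z_1=-1$: the circle equation forces $z_2=0$, but $(-1,0)$ is not in the image of $m$, since $2t/(1+t^2)=0$ forces $t=0$, which maps to $(1,0)$. Strictly read, the stated bijection holds with codomain $\{(z_1,z_2)\in K^2:z_1^2+z_2^2=1\}\setminus\{(-1,0)\}$, and I would either state the proposition this way or interpret the missing point as corresponding to $t=\infty$ in a projective extension. This is the only genuine subtlety; since the paper applies the parametrization over $K=\mathbb{Q}_p$ with $p>5$, characteristic-$2$ degeneracies do not arise and there is no real obstacle.
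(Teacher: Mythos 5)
Your proof is correct and complete. The paper itself gives no argument at all for this proposition --- its ``proof'' is a single citation to Shurman's online notes on rational parametrization of conics --- so your self-contained stereographic-projection argument (projecting from $(-1,0)$, with explicit inverse $\varphi(z_1,z_2)=z_2/(1+z_1)$) is supplying exactly the content the paper outsources. The three algebraic checks you list are all verified correctly: $(1-t^2)^2+(2t)^2=(1+t^2)^2$ puts $m(t)$ on the circle, $1+z_1=2/(1+t^2)$ gives $\varphi\circ m=\mathrm{id}$, and the identity $1+\varphi^2=2/(1+z_1)$ (using $z_2^2=(1-z_1)(1+z_1)$) gives $m\circ\varphi=\mathrm{id}$ away from $z_1=-1$.

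Your ``delicate point'' is in fact a genuine defect in the statement as printed, and you are right to flag it: $(1-t^2)/(1+t^2)=-1$ would force $1=-1$, so in characteristic $\neq 2$ the point $(-1,0)$ is never in the image of $m$, and the map onto \emph{all} $K$-rational points of the circle is injective but not surjective. The correct statement either removes $(-1,0)$ from the codomain or adjoins $t=\infty$. This does not damage the paper: in the application $K=\mathbb{Q}_p$ and the parametrized points $(y_1,y_2)$ are required to satisfy $(y_1y_2,p)=1$, which excludes $y_2\equiv 0$ and hence the missing point. Your additional remark that characteristic $2$ must be excluded (where $z_1^2+z_2^2=1$ degenerates and $m$ becomes constant) is likewise correct and is another hypothesis the proposition silently assumes. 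In short: your route is the standard one the cited notes presumably follow, but written out it exposes two small inaccuracies in the statement that the paper's citation-only proof glosses over.
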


\begin{proof}
See \cite{notes}.
\end{proof}

\begin{Proposition}[Poisson summation formula] \label{Poisson} Let $\Phi : \mathbb{R}\rightarrow \mathbb{R}$ be a Schwartz class function, $\hat\Phi$ its Fourier transform. Then
$$
\sum\limits_{n\in \mathbb{Z}} \Phi(n)=\sum\limits_{n\in \mathbb{Z}} \hat\Phi(n).
$$
\end{Proposition}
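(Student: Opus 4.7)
The plan is to pass through the periodization of $\Phi$ and read off its Fourier coefficients. I would define
$$
F(x) := \sum_{n \in \mathbb{Z}} \Phi(x+n).
$$
Since $\Phi$ is Schwartz, each derivative satisfies $|\Phi^{(j)}(y)| \ll_j (1+|y|)^{-2}$, so the series defining $F$ and all its term-by-term derivatives converge absolutely and uniformly on every compact subset of $\mathbb{R}$. This shows that $F$ is smooth, and the reindexing $n \mapsto n-1$ gives $F(x+1)=F(x)$, so $F$ is $1$-periodic.

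Next I would expand $F$ as a Fourier series. Because $F$ is smooth and $1$-periodic, its Fourier series converges to it pointwise (in fact uniformly):
$$
F(x) = \sum_{k \in \mathbb{Z}} c_k\, e^{2\pi i k x}, \qquad c_k := \int_0^1 F(x)\, e^{-2\pi i k x}\,\dif x.
$$
Substituting the definition of $F$, interchanging sum and integral (legitimate by uniform convergence on $[0,1]$), and using the periodicity $e^{-2\pi i k(x+n)} = e^{-2\pi i k x}$ for $n\in\mathbb{Z}$, the sum folds into a single integral over $\mathbb{R}$:
$$
c_k = \sum_{n\in\mathbb{Z}} \int_0^1 \Phi(x+n)\, e^{-2\pi i k(x+n)}\,\dif x = \sum_{n\in\mathbb{Z}} \int_n^{n+1} \Phi(y)\, e^{-2\pi i k y}\,\dif y = \hat\Phi(k).
$$
Evaluating the resulting identity $F(x) = \sum_{k\in\mathbb{Z}} \hat\Phi(k)\, e^{2\pi i k x}$ at $x=0$ produces exactly $\sum_{n\in\mathbb{Z}} \Phi(n) = \sum_{k\in\mathbb{Z}} \hat\Phi(k)$, as desired.

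The only genuine technical point is the pointwise convergence of the Fourier series of $F$ at $x=0$. This is handled by the well-known fact that $\hat\Phi$ is itself a Schwartz function, so $|\hat\Phi(k)| \ll_A (1+|k|)^{-A}$ for every $A>0$; hence the right-hand side $\sum_k \hat\Phi(k)\, e^{2\pi i k x}$ converges absolutely and uniformly, which both guarantees that the Fourier series represents $F$ pointwise and legitimizes the evaluation at $x=0$. Everything else is routine manipulation of absolutely convergent sums and integrals.
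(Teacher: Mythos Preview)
Your argument is correct and is the standard periodization proof of Poisson summation: form the $1$-periodic function $F(x)=\sum_n \Phi(x+n)$, compute its Fourier coefficients as $\hat\Phi(k)$ via unfolding, and evaluate at $x=0$; the Schwartz hypothesis makes every interchange and the pointwise convergence unproblematic. The paper itself does not supply a proof at all --- it simply cites an external reference --- so there is nothing to compare at the level of method; your write-up is a self-contained replacement for that citation and follows exactly the classical route one finds in the cited source or in any standard text such as Stein--Shakarchi.
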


\begin{proof}
See \cite[section 3]{notes}. 
\end{proof}

\begin{Proposition}[Evaluation of exponential sums with rational functions] \label{Expsums}
Let $p>2$ be a prime, $n\ge 2$ be a natural number and $f=F_1/F_2$ be a rational function where $F_1,F_2\in \mathbb{Z}[x]$. For a polynomial $G$ over $\mathbb{Z}$, let $\mbox{ord}_p(G)$ be the largest power of $p$ dividing all of the coefficients of $G$, and for a rational function $g=G_1/G_2$ with $G_1$ and $G_2$ polynomials over $\mathbb{Z}$, let $\mbox{ord}_p(g) := \mbox{ord}_p(G_1)-\mbox{ord}_p(G_2)$. Set
$$
r:=\mbox{ord}_p(f')
$$
and 
$$
S_{\alpha}(f;p^n):=\sum\limits_{\substack{x=1\\ x\equiv \alpha \bmod{p}}}^{p^n} e_{p^n}(f(x)), 
$$
where $\alpha\in \mathbb{Z}$.
Then we have the following if $r\le n-2$ and $(F_2(\alpha),p)=1$.\medskip\\
(i) If $p^{-r}f'(\alpha)\not\equiv 0\bmod{p}$, then $S_{\alpha}(f,p^n) = 0$.\medskip\\
(ii) If $\alpha$ is a root of the congruence $p^{-r}f'(x)\equiv 0\bmod{p}$ of multiplicity one, then
$$
S_{\alpha}(f;p^n) =\begin{cases} e\left(f(\alpha^{\ast})\right)p^{(n+r)/2} & \mbox{ if } n-r \mbox{ is even,}\\
e\left(f(\alpha^{\ast})\right)p^{(n+r)/2}\left(\frac{A(\alpha)}{p}\right)\cdot \frac{G_p}{\sqrt{p}} & \mbox{ if } n-r \mbox{ is odd,}
\end{cases}
$$
where $\alpha^{\ast}$ is the unique lifting of $\alpha$ to a solution of the congruence $p^{-r}f'(x) \equiv  0 \bmod p^{[(n-r+1)/2]}$, and 
$$
A(\alpha):=2p^{-r}f''(\alpha^{\ast}).
$$
\end{Proposition}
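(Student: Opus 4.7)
The plan is to reduce $S_{\alpha}(f;p^n)$ to a quadratic Gauss sum via a $p$-adic stationary-phase analysis, leveraging the fact that $(F_2(\alpha),p)=1$ makes the $p$-adic Taylor expansion of $f$ valid at points near $\alpha$. The first step is the auxiliary regularity claim that for any $y$ with $(F_2(y),p)=1$, each Taylor coefficient $f^{(k)}(y)/k!$ lies in $\mathbb{Z}_p$: iterating the quotient rule expresses every such coefficient as a polynomial in $F_2(y)^{-1}$ with integer numerator, and $F_2(y)$ is a $p$-adic unit. Hence in
$$f(y+p^t z)=\sum_{k\ge 0}p^{kt}z^k\,\frac{f^{(k)}(y)}{k!},$$
every $k$-th term has $p$-adic valuation at least $kt$, and for $2t\ge n$ the series truncates modulo $p^n$ to its constant and linear parts.

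For case (i), I would split $x=y+p^t z$ with $y\equiv\alpha\bmod p$ ranging over residues mod $p^t$ and $z$ ranging over residues mod $p^{n-t}$; the truncated expansion yields
$$S_{\alpha}(f;p^n)=\sum_{y}e_{p^n}\!\bigl(f(y)\bigr)\sum_{z}e_{p^{n-t}}\!\bigl(z\,f'(y)\bigr).$$
Orthogonality collapses the inner sum to $0$ unless $p^{n-t-r}\mid g(y)$, where $f'=p^r g$. Since $g(\alpha)\not\equiv 0\bmod p$ and $y\equiv\alpha\bmod p$, this condition fails as soon as $n-t-r\ge 1$, giving $S_{\alpha}=0$ whenever one can choose $t$ with $n/2\le t\le n-r-1$. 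In the remaining range $r>n/2-1$ I would iterate: the first reduction leaves $S_{\alpha}=p^{n-t}\sum_{y}e_{p^n}(f(y))$ with $y$ running over a smaller set, and applying the same truncation to this residual sum eventually forces cancellation; the hypothesis $r\le n-2$ guarantees that the process terminates before the effective modulus collapses to $p^0$.

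For case (ii), Hensel's lemma applied to the simple root $\alpha$ of $g(x)\equiv 0\bmod p$ furnishes a unique lift $\alpha^{\ast}$ of $\alpha$ to precision $p^{[(n-r+1)/2]}$; the same orthogonality argument now \emph{localizes} the surviving $y$'s to the neighborhood of $\alpha^{\ast}$ rather than forcing vanishing. Shifting to $\alpha^{\ast}$ and re-expanding $f$, the linear contribution is absorbed by the precision of the Hensel lift, and the surviving quadratic term produces a Gauss sum to modulus $p^{n-r}$ with leading coefficient proportional to $A(\alpha)=2p^{-r}f''(\alpha^{\ast})$, multiplied by the constant-term factor $e(f(\alpha^{\ast}))$. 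When $n-r$ is even, completing the square reduces this to a trivial full additive-character sum, yielding the clean factor $p^{(n+r)/2}$; when $n-r$ is odd, one further reduction leaves a genuine Gauss sum to modulus $p$ whose evaluation produces the Jacobi symbol $\bigl(\frac{A(\alpha)}{p}\bigr)$ together with the normalization $G_p/\sqrt{p}$.

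The main obstacle is the delicate $p$-adic valuation bookkeeping around the Hensel shift in case (ii). One must verify simultaneously that all Taylor terms of order $\ge 3$ are genuinely negligible modulo $p^n$ (this uses $p>2$, needed to invert $2$ in the quadratic Taylor coefficient, together with the hypothesis $r\le n-2$ in a sharp way) and that the precision $[(n-r+1)/2]$ of the Hensel lift is exactly enough to kill the linear contribution without overshooting; the parity split $n-r$ even/odd in the statement is a direct consequence of whether this critical precision is achieved with a whole or half extra factor of $p$, and handling this cleanly is the crux of the Cochrane--Zheng-type evaluation.
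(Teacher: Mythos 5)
The paper does not actually prove this proposition: its ``proof'' is the single line ``This is \cite[Theorem 3.1(iii)]{CoZ}'', so your attempt is really being measured against Cochrane--Zheng's argument. Your stationary-phase outline is the right general strategy, and your auxiliary claim that the Taylor coefficients $f^{(k)}(y)/k!$ lie in $\mathbb{Z}_p$ when $(F_2(y),p)=1$ is correct, but the proposal has a genuine gap precisely in the parameter range the proposition is designed to cover.

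The gap is in the choice of the splitting parameter $t$. You bound the $k$-th Taylor term only by $v_p\bigl(p^{kt}z^kf^{(k)}(y)/k!\bigr)\ge kt$ and therefore must take $2t\ge n$, i.e.\ $t\ge\lceil n/2\rceil$. In case (i) the inner sum over $z$ forces vanishing only when $n-t-r\ge 1$, so this covers only $r\le\lfloor n/2\rfloor-1$. For $\lfloor n/2\rfloor\le r\le n-2$ the divisibility condition $p^{n-t-r}\mid g(y)$ holds for \emph{every} $y$, the inner sum contributes in full, and your ``first reduction'' degenerates to the trivial identity $S_\alpha=p^{n-t}\sum_y e_{p^n}(f(y))$ with no localization at all; the proposed iteration then has nothing to bite on, and it is not explained how it would ever produce cancellation. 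The missing ingredient is that $f^{(k)}=(f')^{(k-1)}$ and $\mathrm{ord}_p(f')=r$ together give $v_p\bigl(f^{(k)}(y)/k!\bigr)\ge r-v_p(k)$ for all $k\ge 1$; this extra $p^r$ lets one take $t=\lceil(n-r)/2\rceil$ instead of $\lceil n/2\rceil$, for which the quadratic and higher Taylor terms are already $\equiv 0\bmod p^n$ and both cases follow in a single step for every $r\le n-2$. A related inaccuracy occurs in case (ii): the intermediate object you describe, ``a Gauss sum to modulus $p^{n-r}$ with leading coefficient proportional to $A(\alpha)$'', is not actually reachable, because the substitution $x=\alpha^{\ast}+pu$ that would produce it leaves a cubic Taylor term of valuation only about $r+3$, which cannot be discarded modulo $p^n$. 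With the correct $t$ the structure is different: for $n-r$ even exactly one critical residue $y$ survives and no Gauss sum appears at all, while for $n-r$ odd the surviving residues form a single coset modulo $p^{t-1}$ and the residual sum is a Gauss sum modulo $p$ only, which is exactly where $\left(\frac{A(\alpha)}{p}\right)G_p/\sqrt{p}$ enters. Your final formulas and the parity dichotomy are correct, but the route to them needs this repair.
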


\begin{proof} This is \cite[Theorem 3.1(iii)]{CoZ}.
\end{proof}

\begin{Proposition} \label{Pytha}
We have  
$$
\sharp\{(x_1,x_2,x_3)\in \mathbb{Z}^3  : x_1^2+x_2^2=x_3^2,\ |x_3|\le N\}\sim \frac{8}{\pi}N\log N
$$
as $N\rightarrow \infty$. 
\end{Proposition}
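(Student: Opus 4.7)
The plan is to combine Euclid's parametrization of primitive Pythagorean triples with a lattice-point count in a circular sector, then assemble the final asymptotic via a divisor sum for non-primitive triples and independent sign choices for the three coordinates.

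First, I would dispose of the degenerate triples in which some $x_i = 0$: the equation then forces $|x_1| = |x_3|$ or $|x_2| = |x_3|$, so only $O(N)$ such triples have $|x_3| \le N$. On the remaining triples all coordinates are nonzero, and the map $(x_1, x_2, x_3) \mapsto (|x_1|, |x_2|, |x_3|)$ is exactly $8$-to-$1$ onto its image. Hence it suffices to prove that the number $T(N)$ of ordered triples $(x_1, x_2, x_3) \in \mathbb{Z}_{>0}^3$ with $x_1^2 + x_2^2 = x_3^2$ and $x_3 \le N$ satisfies $T(N) \sim (N \log N)/\pi$.

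Next, I would apply Euclid's parametrization: every primitive positive triple with $x_1$ odd and $x_2$ even is uniquely of the form $(m^2 - n^2,\, 2mn,\, m^2 + n^2)$ for a pair $(m, n)$ with $m > n \ge 1$, $\gcd(m, n) = 1$ and $m \not\equiv n \pmod{2}$; allowing the swap $x_1 \leftrightarrow x_2$ doubles the count. The sector $\{(u, v) : u > v > 0,\, u^2 + v^2 \le N\}$ has area $\pi N/8$, and a standard M\"obius sieve shows that the density in $\mathbb{Z}^2$ of pairs $(m, n)$ with $\gcd(m, n) = 1$ and opposite parity equals $4/\pi^2$. Consequently the number $P(N)$ of primitive positive ordered triples with $x_3 \le N$ satisfies $P(N) \sim 2 \cdot (\pi N / 8)(4/\pi^2) = N/\pi$.

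Finally, in any Pythagorean triple one has $\gcd(x_1, x_2)^2 \mid x_3^2$, hence $\gcd(x_1, x_2) = \gcd(x_1, x_2, x_3)$, so every positive ordered triple is a unique positive-integer multiple of a primitive one and
$$
T(N) = \sum_{d \le N} P(N/d) = \frac{N}{\pi} \sum_{d \le N} \frac{1}{d} + O(N) \sim \frac{N \log N}{\pi}.
$$
Multiplying by $8$ to account for the sign choices and absorbing the $O(N)$ contribution from the degenerate triples then yields the claimed asymptotic $(8/\pi) N \log N$. The one mildly delicate point will be keeping the error term in the lattice-sieve count for $P(x)$ uniform enough in $x$ so that its accumulated contribution in the divisor sum over $d \le N$ remains $O(N)$; this is routine but is the only step that goes beyond a direct application of the parametrization and the sieve.
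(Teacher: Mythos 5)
Your proposal is correct, but it takes a genuinely different route from the paper: the paper offers no proof at all for this proposition, simply citing Nowak--Recknagel and Sierpinski, whereas you give a self-contained elementary argument. Your constants all check out: the density of coprime opposite-parity pairs is indeed $4/\pi^2$ (namely $\frac14\prod_{p>2}(1-p^{-2})=2/\pi^2$ for each of the two parity patterns), the sector $\{u>v>0,\ u^2+v^2\le N\}$ has area $\pi N/8$, so $P(N)\sim 2\cdot\frac{\pi N}{8}\cdot\frac{4}{\pi^2}=N/\pi$ (consistent with Lehmer's classical $N/(2\pi)$ for unordered primitive triangles), and the factor $8$ from the sign choices together with the harmonic sum gives $\frac{8}{\pi}N\log N$. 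The reduction via $\gcd(x_1,x_2)^2\mid x_3^2$ is also sound. The one point you flag as delicate is genuinely the only one needing care, and it does work: the M\"obius sieve gives $P(x)=x/\pi+O(\sqrt{x}\log(2x))$ uniformly, and $\sum_{d\le N}\sqrt{N/d}\,\log(2N/d)\ll \sqrt{N}\int_1^N u^{-1/2}\log(2N/u)\,\dif u\ll N$, so the accumulated error is $O(N)=o(N\log N)$ as required. What the paper's citation buys instead is a much sharper error term (the three-dimensional divisor problem treated by Nowak and Recknagel), which is irrelevant here since only the leading asymptotic is used; your argument has the advantage of making the paper self-contained at the cost of a page of routine lattice-point counting.
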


\begin{proof} This is found in \cite{NoR} and originally due to Sierpinski \cite{Sie}. \end{proof}

\section{Proof of the main result}
\subsection{Parametrization of points} The congruence above resembles the equation 
\begin{equation} \label{circle}
x_1^2+x_2^2-x_3^2=0
\end{equation} 
of a circle in homogeneous coordinates. Let us first see that every solution $(x_1,x_2,x_3)\in \mathbb{Z}^3$ with $(x_1x_2x_3,p)=1$ of the congruence 
\begin{equation} \label{congru1}
x_1^2+x_2^2-x_3^2\equiv 0 \bmod{p^n}
\end{equation}
above comes from a solution of \eqref{circle} in the $p$-adic integers. To this end, we need to use a Hensel-type argument. Let $(x_1,x_2,x_3)$ be such a solution. We would like to lift it to a solution $(\tilde{x}_1,\tilde{x}_2,\tilde{x}_3)$ of the congruence
\begin{equation} \label{congru2}
\tilde{x}_1^2+\tilde{x}_2^2-\tilde{x}_3^2\equiv 0\bmod{p^{n+1}}.
\end{equation} 
So we consider $\tilde{x}_i=x_i+k_ip^n$ with $k_i=0,...,p-1$, $i=1,2,3$ and satisfying
$$
(x_1+k_1p^n)^2+(x_2+k_2p^n)^2-(x_3+k_3p^n)^2\equiv 0 \bmod{p^{n+1}}.
$$ 
Expanding the squares and using $2n\ge n+1$, this is equivalent to 
$$
x_1^2+x_2^2-x_3^2+2k_1p^nx_1+2k_2p^nx_2-2k_3p^nx_3\equiv 0 \bmod{p^{n+1}},
$$
which in turn is equivalent to
$$
\frac{x_1^2+x_2^2-x_3^2}{p^n}+2x_1k_1+2x_2k_2-2x_3k_3\equiv 0\bmod{p}.
$$
This linear congruence in $k_1,k_2,k_ 3$ has exactly $p^2$ solutions. In particular, a solution $(x_1,x_2,x_3)$ of \eqref{congru1} lifts to a solution $(\tilde{x}_1,\tilde{x}_2,\tilde{x}_3)$ of \eqref{congru2}.  So indeed, every solution of \eqref{congru1} arises from a solution of \eqref{circle} in $\mathbb{Z}_p$. Now we parametrize these solutions. This works in a similar way as for Pythagorean triples. First, by Proposition \ref{para}, the $\mathbb{Q}_p$-rational points $(z_1,z_2)$ on the circle
\begin{equation} \label{conic}
z_1^2+z_2^2=1
\end{equation}
are parametrized as 
$$
(z_1,z_2)=\left(\frac{1-t^2}{1+t^2},\frac{2t}{1+t^2}\right),
$$
where $t\in \mathbb{Q}_p$ with $1+t^2\not=0$. Now if $(x_1,x_2,x_3)$ is a solution of \eqref{circle} in the $p$-adic integers, where $x_3\not=0$, then
$$
\left(\frac{x_1}{x_3},\frac{x_2}{x_3}\right)
$$
is a point on the circle in \eqref{conic}. Hence, we have 
$$
\left(\frac{x_1}{x_3},\frac{x_2}{x_3}\right)=\left(\frac{1-t^2}{1+t^2},\frac{2t}{1+t^2}\right).
$$
But we restricted ourselves to triples $(x_1,x_2,x_3)$ with $|x_i|_p=1$, $i=1,2,3$. Hence, we have 
$$
1=\left|\frac{x_1}{x_3}\right|_p=\left|\frac{1-t^2}{1+t^2}\right|_p
$$
and 
$$
1=\left|\frac{x_2}{x_3}\right|_p=\left|\frac{2t}{1+t^2}\right|_p.
$$
If $|t|_p<1$, then
$$
\left|\frac{2t}{1+t^2}\right|_p=|t|_p<1,
$$
and if $|t|_p>1$, then
$$
\left|\frac{2t}{1+t^2}\right|_p=|t|_p^{-1}<1.
$$
Hence $|t|_p=|1+t^2|_p=|1-t^2|_p=1$, and 
$$
x_1=(1-t^2)u, \quad x_2=2tu, \quad x_3=(1+t^2)u, 
$$
where $u$ is a unit in the ring of integers $\mathbb{Z}_p$, i.e. $|u|_p$=1. 

By reducing modulo $p^n$, we deduce that the solutions of the congruence
$$
y_1^2+y_2^2-1\equiv 0 \bmod{p^n}
$$
with $(y_1y_2,p)=1$ are parametrized in the form
\begin{equation} \label{para1}
y_1=\frac{1-t^2}{1+t^2}, \quad y_2=\frac{2t}{1+t^2}, \quad t \bmod{p^n},\ (t(1-t^2)(1+t^2),p)=1.
\end{equation} 
Here $1/(1+t^2)$ stands for a multiplicative inverse of $1+t^2 \bmod{p^n}$. Moreover, the pairs $(y_1,y_2)$ given as in \eqref{para1} are distinct modulo $p^n$ by the following argument: Suppose that  
\begin{equation} \label{t1t21}
\frac{1-t_1^2}{1+t_1^2}\equiv \frac{1-t_2^2}{1+t_2^2} \bmod{p^n}
\end{equation}
and 
\begin{equation} \label{t1t22}
\frac{2t_1}{1+t_1^2}\equiv \frac{2t_2}{1+t_2^2} \bmod{p^n}.
\end{equation}
Then from \eqref{t1t21} it follows upon multiplying both sides with the denominators that 
$$
1-t_1^2t_2^2+t_2^2-t_1^2\equiv 1-t_1^2t_2^2+t_1^2-t_2^2\bmod{p^{n}}
$$
and hence 
$$
t_1^2\equiv t_2^2\bmod{p^n}.
$$
So if $t_1\not\equiv t_2\bmod{p^n}$, then $t_1\equiv -t_2\bmod{p^n}$. However, in this case, \eqref{t1t22} implies $t_1\equiv t_2\equiv 0\bmod{p^n}$ contradicting the assumption that $(t_i,p)=1$ for $i=1,2$. 

\subsection{Double Poisson summation} We start by writing 
\begin{equation*}
\begin{split}
T= & \sum\limits_{\substack{(x_1,x_2,x_3)\in \mathbb{Z}^3\\ (x_1x_2x_3,p)=1\\ x_1^2+x_2^2-x_3^2\equiv 0 \bmod{p^n}}} \Phi\left(\frac{x_1}{N}\right)\Phi\left(\frac{x_2}{N}\right)\Phi\left(\frac{x_3}{N}\right)\\
=& \sum\limits_{(x_3,p)=1} \Phi\left(\frac{x_3}{N}\right)\sum\limits_{\substack{y_1,y_2\bmod{p^n}\\ (y_1y_2,p)=1 \\ y_1^2+y_2^2-1\equiv 0\bmod{p^n}}} \sum\limits_{\substack{x_1\equiv x_3y_1\bmod{p^n}\\ x_2\equiv x_3y_2\bmod{p^n}}} \Phi\left(\frac{x_1}{N}\right)\Phi\left(\frac{x_2}{N}\right).
\end{split}
\end{equation*}
Now we apply Poisson summation, Proposition \ref{Poisson}, after a linear change of variables to the inner double sum over $x_1$ and $x_2$, obtaining
\begin{equation*}
T= \frac{N^2}{p^{2n}}\sum\limits_{(x_3,p)=1} \Phi\left(\frac{x_3}{N}\right)\sum\limits_{(k_1,k_2)\in \mathbb{Z}^2} \hat{\Phi}\left(\frac{k_1N}{p^n}\right)\hat{\Phi}\left(\frac{k_2N}{p^n}\right)\sum\limits_{\substack{y_1,y_2\bmod{p^n}\\ (y_1y_2,p)=1 \\ y_1^2+y_2^2-1\equiv 0\bmod{p^n}}} e_{p^n}\left(k_1x_3y_1+k_2x_3y_2\right).
\end{equation*}
Using our parametrization \eqref{para}, we deduce that
\begin{equation*}
T= \frac{N^2}{p^{2n}}\sum\limits_{(x_3,p)=1} \Phi\left(\frac{x_3}{N}\right)\sum\limits_{(k_1,k_2)\in \mathbb{Z}^2} \hat{\Phi}\left(\frac{k_1N}{p^n}\right)\hat{\Phi}\left(\frac{k_2N}{p^n}\right)\sum\limits_{\substack{t \bmod{p^n}\\
(t(1-t^2)(1+t^2),p)=1 }} e_{p^n}\left(x_3\cdot \frac{k_1(1-t^2)+2k_2t}{1+t^2}\right).
\end{equation*}

We decompose $T$ into 
\begin{equation} \label{divide}
T=T_0+U,
\end{equation}
where $T_0$ is the main term contribution of $(k_1,k_2)=(0,0)$. Hence,
\begin{equation*}
T_0= \hat{\Phi}(0)^2\cdot \frac{N^2}{p^{2n}}\sum\limits_{(x_3,p)=1} \Phi\left(\frac{x_3}{N}\right) \cdot p^{n-1}(p-s(p)),
\end{equation*}
where $s(p)$ is defined as in \eqref{sdef}. (Note that $1+t^2\equiv 0 \bmod{p}$ has two solutions modulo $p$ if $p\equiv 1 \bmod{4}$ and no solution if $p\equiv 3\bmod{4}$.) 
If $N\ge p^{n\varepsilon}$ for any fixed $\varepsilon>0$, then the term $T_0$ can be simplified into
\begin{equation*}
\begin{split}
T_0= & \hat{\Phi}(0)^2 \cdot \frac{p-s(p)}{p}\cdot \frac{N^2}{p^n} \cdot \left(\sum\limits_{x} \Phi\left(\frac{x}{N}\right) -\sum\limits_{x} \Phi\left(\frac{x}{N/p}\right)\right)\\
= & \hat{\Phi}(0)^2 \cdot \frac{p-s(p)}{p}\cdot \frac{N^2}{p^n}\cdot \left(N\cdot \frac{p-1}{p}\cdot \hat\Phi(0)
+\sum\limits_{y\in \mathbb{Z}\setminus\{0\}} \left(N\hat\Phi(Ny)-\frac{N}{p}\cdot \hat\Phi\left(\frac{Ny}{p}\right)\right)\right)\\
= & \hat{\Phi}(0)^3 \cdot \frac{(p-s(p))(p-1)}{p^2}\cdot \frac{N^3}{p^{n}}\cdot\left(1+o(1)\right)
\end{split}
\end{equation*}
as $n\rightarrow\infty$, 
where we again use Poisson summation for the sums over $x$ above and the rapid decay of $\hat\Phi$.   

\subsection{Evaluation of exponential sums} Now we look at the error contribution
\begin{equation} \label{errorcont}
U= \frac{N^2}{p^{2n}}\sum\limits_{(x_3,p)=1} \Phi\left(\frac{x_3}{N}\right)\sum\limits_{(k_1,k_2)\in \mathbb{Z}^2\setminus \{(0,0)\}} \hat{\Phi}\left(\frac{k_1N}{p^n}\right)\hat{\Phi}\left(\frac{k_2N}{p^n}\right) E\left(k_1,k_2,x_3;p^n\right)
\end{equation}
with 
\begin{equation*}
E\left(k_1,k_2,x_3;p^n\right):=\sum\limits_{\substack{t \bmod{p^n}\\
(t(1-t^2)(1+t^2),p)=1 }} e_{p^n}\left(x_3\cdot \frac{k_1(1-t^2)+2k_2t}{1+t^2}\right).
\end{equation*}
Assume that
$$
(k_1,k_2,p^n)=p^r.
$$
The contributions of $r=n-1$ and $r=n$ to the right-hand side of \eqref{errorcont} are $O_{\varepsilon}(1)$ if $N\ge p^{n\varepsilon}$ by the rapid decay of $\hat\Phi$. (Recall that $(k_1,k_2)=(0,0)$ is excluded from the summation.) In the following, we assume that $r\le n-2$ so that Proposition \ref{Expsums} is applicable.

We split the inner-most sum over $t$ into 
\begin{equation*} \label{split}
E\left(k_1,k_2,x_3;p^n\right)=\sum\limits_{\substack{\alpha=1\\ \alpha \not\equiv 0,\pm 1 \bmod{p}\\ 1+\alpha^2\not\equiv 0\bmod{p}}}^p S_{\alpha}\left(f_{k_1,k_2};p^n\right)=\sum\limits_{\substack{\alpha=1\\ \alpha^2 \not\equiv 0,\pm 1 \bmod{p}}}^p S_{\alpha}\left(f_{k_1,k_2};p^n\right),
\end{equation*}
where 
\begin{equation*} \label{Salphadef}
S_{\alpha}\left(f_{k_1,k_2,x_3};p^n\right)=\sum\limits_{\substack{t \bmod{p^n}\\ t\equiv \alpha\bmod{p}}} e_{p^n}\left(f_{k_1,k_2,x_3}(t)\right)
\end{equation*}
with 
\begin{equation*} \label{fdef}
f_{k_1,k_2,x_3}(t):=x_3\cdot \frac{k_1(1-t^2)+2k_2t}{1+t^2}.
\end{equation*}
We calculate that 
\begin{equation*}
\begin{split}
f_{k_1,k_2,x_3}'(t)
= & 2x_3\cdot \frac{k_2(1-t^2)-2k_1t}{(1+t^2)^2}.
\end{split}
\end{equation*}
Set
$$
l_1:=\frac{k_1}{p^r}, \quad l_2:=\frac{k_2}{p^r}.
$$
Then using Proposition \ref{Expsums}, if $\alpha^2+1\not=0$, we have $S_{\alpha}\left(f_{k_1,k_2,x_3};p^n\right)=0$ unless 
\begin{equation} \label{keycong}
2l_1\alpha\equiv l_2(1-\alpha^2) \bmod{p}.
\end{equation}
If $\alpha\not=0,\pm 1$, then it follows that $(l_1l_2,p)=1$ and 
$$
(k_1,p^n)=p^r=(k_2,p^n).
$$
In summary, we have 
\begin{equation*}
U= \frac{N^2}{p^{2n}}\sum\limits_{(x_3,p)=1} \Phi\left(\frac{x_3}{N}\right)\sum\limits_{r=0}^{n-2} \sum\limits_{\substack{\alpha=1\\ \alpha^2 \not\equiv 0,\pm 1 \bmod{p}}}^p \sum\limits_{\substack{(l_1,l_2)\in \mathbb{Z}^2\\ (l_1l_2,p)=1\\ 2l_1\alpha\equiv l_2(1-\alpha^2) \bmod{p}}} \hat{\Phi}\left(\frac{l_1N}{p^{n-r}}\right)\hat{\Phi}\left(\frac{l_2N}{p^{n-r}}\right) S_{\alpha}\left(f_{p^rl_1,p^rl_2};p^n\right)+O_{\varepsilon}(1)
\end{equation*}
if $N\ge p^{n\varepsilon}$. 

Let $D:=l_1^2+l_2^2$. The congruence \eqref{keycong} has a double root $\alpha \bmod{p}$ iff $D\equiv 0 \bmod{p}$, and in this case we get $\alpha^2\equiv -1\bmod{p}$ which is excluded from the summation over $\alpha$. Hence, only the case $D\not\equiv 0\bmod{p}$ occurs in which we have no root if $D$ is a quadratic non-residue modulo $p$ and two roots of multiplicity one if $D$ is a quadratic residue modulo $p$. Therefore, we may assume from now on that $D\not\equiv 0 \bmod{p}$ and $D$ is a quadratic residue modulo $p$. Then  using Proposition \ref{Expsums}, if $\alpha$ satisfies \eqref{keycong}, we obtain
$$
S_{\alpha}\left(f_{p^rl_1,p^rl_2},p^n\right)=
\begin{cases}
e_{p^n}\left(f_{p^rl_1,p^rl_2}(\alpha^{\ast})\right)\cdot p^{(n+r)/2} & \mbox{ if } n-r \mbox{ is even,}\\
e_{p^n}\left(f_{p^rl_1,p^rl_2}(\alpha^{\ast})\right)\cdot \left(\frac{A(\alpha)}{p}\right)\cdot \frac{G_p}{\sqrt{p}}\cdot p^{(n+r)/2} &  \mbox{ if } n-r \mbox{ is odd,}
\end{cases}
$$
where $\alpha^{\ast}$ is the unique lifting of $\alpha$ to a root of the congruence
$$
2l_1\alpha^{\ast}\equiv l_2(1-(\alpha^{\ast})^2) \bmod{p^{n-r}}
$$
and 
$$
A(\alpha)=\frac{2 f_{p^rl_1,p^rl_2}''(\alpha^{\ast})}{p^r}.
$$
We calculate that 
$$
\alpha^{\ast}\equiv \frac{-l_1\pm\sqrt{D}}{l_2} \bmod{p^{n-r}},
$$
where $\sqrt{D}$ denotes one of the two roots of the congruence
$$
x^2\equiv D\bmod{p^{n-r}}.
$$
A short calculation gives
$$
e_{p^n}\left(f_{p^rl_1,p^rl_2}(\alpha^{\ast})\right)=e_{p^{n-r}}\left(\pm x_3\sqrt{D}\right).
$$
Further, we calculate the second derivative of $f_{p^rl_1,p^rl_2}$ to be
$$
f_{p^rl_1,p^rl_2}''(t)=6x_3\cdot \frac{-l_1-3l_2t+3l_1t^2+l_2t^3}{(1+t^2)^3}\cdot p^{r}.
$$
A short calculation gives 
$$
A(\alpha)=\frac{2x_3l_2^4}{\sqrt{D}(l_1\mp\sqrt{D})^2}
$$
and hence 
$$
\left(\frac{A(\alpha)}{p}\right)=\left(\frac{2x_3\sqrt{D}}{p}\right). 
$$

It is easy to check that the cases $\alpha^2\equiv 0,\pm 1\bmod{p}$ cannot occur if $\alpha$ is a root of multiplicity one of the congruence \eqref{keycong} with $(l_1l_2,p)=1$.  So altogether, we obtain
\begin{equation} \label{Uaftereva}
\begin{split}
U= & \frac{N^2}{p^{3n/2}}\sum\limits_{r=0}^{n-2} p^{r/2} \sum\limits_{\substack{(l_1l_2,p)=1\\ D=\Box\bmod{p}}} \hat{\Phi}\left(\frac{l_1N}{p^{n-r}}\right)\hat{\Phi}\left(\frac{l_2N}{p^{n-r}}\right)\times \\ & \sum\limits_{(x_3,p)=1} \Phi\left(\frac{x_3}{N}\right)\cdot C_{n-r}(x_3,D)\cdot 
\left(e_{p^{n-r}}\left(x_3\sqrt{D}\right)+ e_{p^{n-r}}\left(-x_3\sqrt{D}\right)\right)+O_{\varepsilon}(1),
\end{split}
\end{equation}
where $D=\Box\bmod{p}$ means that $D$ is a quadratic residue modulo $p$ and  
$$
C_{n-r}(x_3,D):=\begin{cases} 1 & \mbox{ if } n-r \mbox{ is even,}\\ 
\left(\frac{2x_3\sqrt{D}}{p}\right)\cdot \frac{G_p}{\sqrt{p}} & \mbox { if } n-r
\mbox{ is odd.} \end{cases}
$$
If $(x_3D,p)=1$, then
$$
C_{n-r}(x_3,D)= \frac{G_{p^{n-r}}}{p^{(n-r)/2}}\cdot 
\left(\frac{2x_3\sqrt{D}}{p^{n-r}}\right)
$$
in each of the two cases above. Therefore, $U$ can be more compactly written as 
\begin{equation} \label{compact}
\begin{split}
U= & \frac{N^2}{p^{3n/2}}\sum\limits_{r=0}^{n-2} p^{r/2} \cdot \frac{G_{p^{n-r}}}{p^{(n-r)/2}} \cdot \sum\limits_{\substack{D=1\\ D\equiv \Box \bmod{p}}}^{\infty} F_{n-r}(D) \times\\
 & \sum\limits_{x_3\in \mathbb{Z}} \Phi\left(\frac{x_3}{N}\right)\cdot \left(\frac{x_3}{p^{n-r}}\right) \cdot 
\left(e_{p^{n-r}}\left(x_3\sqrt{D}\right)+ e_{p^{n-r}}\left(-x_3\sqrt{D}\right)\right)+O_{\varepsilon}(1),
\end{split}
\end{equation}
where 
\begin{equation} \label{FD}
F_{n-r}(D):= \left(\frac{2\sqrt{D}}{p^{n-r}}\right)\cdot \sum\limits_{\substack{(l_1l_2,p)=1\\ l_1^2+l_2^2=D}} \hat{\Phi}\left(\frac{l_1N}{p^{n-r}}\right)\hat{\Phi}\left(\frac{l_2N}{p^{n-r}}\right).
\end{equation}

\subsection{Single Poisson summation and final count} 
Now we split the sum over $x_3$ in \eqref{compact} into subsums over residue classes modulo $p$ and perform Poisson summation, getting
\begin{equation*}
\begin{split}
\sum\limits_{x_3\in \mathbb{N}} \Phi\left(\frac{x_3}{N}\right)\cdot \left(\frac{x_3}{p^{n-r}}\right) \cdot e_{p^{n-r}}\left(\pm x_3\sqrt{D}\right) = 
& \sum\limits_{u=1}^{p} \left(\frac{u}{p^{n-r}}\right)
\sum\limits_{x_3\equiv u\bmod{p}} \Phi\left(\frac{x_3}{N}\right)\cdot
e_{p^{n-r}}\left(\pm x_3\sqrt{D}\right)\\
= & \frac{N}{p} \sum\limits_{v\in \mathbb{Z}} \left(\sum\limits_{u=1}^{p} \left(\frac{u}{p^{n-r}}\right) \cdot e_{p}(uv)\right) 
\cdot \hat\Phi\left(\frac{N}{p}\left(\frac{\pm \sqrt{D}}{p^{n-r-1}}-v\right)\right).
\end{split}
\end{equation*}
Using the rapid decay of $\hat\Phi$, the above is $O(N)$ if $||\sqrt{D}/p^{n-r-1}||\le p^{1+n\varepsilon}N^{-1}$ and negligible otherwise, provided $n$ is large enough. We may constraint $\sqrt{D}$ to the range $0\le \sqrt{D}\le p^{n-r}$ and then write $\sqrt{D}=wp^{n-r-1}+l_3$, where $w=0,...,p-1$ and $|l_3|\le L_r$ with
$$
L_r:=p^{n-r+n\varepsilon}N^{-1}.
$$  
Moreover, the summations over $l_1$ and $l_2$ in \eqref{FD} can be cut off at $|l_1|,|l_2|\le L_r$ at the cost of a negligible error if $n$ is large enough. It follows that
 \begin{equation*} 
\begin{split}
U\ll & \frac{N^3}{p^{3n/2}}\sum\limits_{r=0}^{n-2} p^{r/2} \sum\limits_{w=0}^{p-1} \sum\limits_{\substack{(l_1,l_2,l_3)\in \mathbb{Z}^3\setminus\{(0,0,0)\}\\ |l_1|,|l_2|,|l_3|\le L_r\\ l_1^2+l_2^2\equiv (wp^{n-r-1}+l_3)^2\bmod{p^{n-r}}}} 1 +O_{\varepsilon}(1).
\end{split}
\end{equation*}
The congruence above implies 
$$
l_1^2+l_2^2\equiv l_3^2\bmod{p^{n-r-1}}.
$$
Hence,
 \begin{equation*} 
\begin{split}
U\ll & \frac{N^3}{p^{3n/2}}\sum\limits_{r=0}^{n-2} p^{r/2} \sum\limits_{\substack{(l_1,l_2,l_3)\in \mathbb{Z}^3\setminus\{(0,0,0)\}\\ |l_1|,|l_2|,|l_3|\le L_r\\ l_1^2+l_2^2\equiv l_3^2\bmod{p^{n-r-1}}}} 1 +O_{\varepsilon}(1).
\end{split}
\end{equation*}
Now if $L_r< \sqrt{p^{n-r-1}/2}$, then the congruence above can be replaced by the equation $l_1^2+l_2^2=l_3^2$, i.e., $(l_1,l_2,l_3)$ is an ordinary Pythagorean triple. Certainly, this is the case if $N\ge p^{n/2+2n\varepsilon}$ and $n$ is large enough. Hence, in this case, we have
\begin{equation*} 
\begin{split}
U\ll & \frac{N^3}{p^{3n/2}}\sum\limits_{r=0}^{n-2} p^{r/2} \sum\limits_{\substack{(l_1,l_2,l_3)\in \mathbb{Z}^3\setminus\{(0,0,0)\}\\ |l_1|,|l_2|,|l_3|\le L_r\\ l_1^2+l_2^2=l_3^2}} 1 +O_{\varepsilon}(1).
\end{split}
\end{equation*}

Now we apply Proposition \ref{Pytha} to bound $U$ by 
\begin{equation*} 
\begin{split}
U\ll & \frac{N^3}{p^{3n/2}}\sum\limits_{r=0}^{n-2} p^{r/2} L_r^{1+\varepsilon} \ll \frac{N^2}{p^{n/2}}\cdot p^{9n\varepsilon}.
\end{split}
\end{equation*}
This needs to be compared to the main term which is of size
$$
T_0\asymp \frac{N^3}{p^n}.
$$
If $N\ge p^{(1/2+10\varepsilon)n}$, then $U=o(T_0)$. This completes the proof of Theorem \ref{mainresult}.

\end{document}